\newtheorem{theorem}{Theorem}
\newtheorem{corollary}{Corollary} 
\newtheorem{proposition}{Proposition} 
\newtheorem{fact}{Fact}
\theoremstyle{definition} 
\newtheorem{remark}{Remark}
\begin{document}

\title[Topological Groups with Strong Disconnectedness Properties]{Topological Groups\\ with Strong 
Disconnectedness Properties} 

\author{Ol'ga Sipacheva}

\begin{abstract}
Topological groups whose underlying spaces are basically disconnected, $F$-, or $F'$-spaces but not $P$-spaces 
are considered. It is proved, in particular, that the existence of a Lindel\"of basically 
disconnected topological group which is not a $P$-space is equivalent to the existence of 
a Boolean basically disconnected Lindel\"of group of countable pseudocharacter, that free and free Abelian 
topological groups of zero-dimensional non-$P$-spaces are never $F'$-spaces, and that the existence of a free 
Boolean $F'$-group which is not a $P$-space is equivalent to that of selective ultrafilters on $\omega$. 
\end{abstract}

\keywords{Free topological group, free Abelian topological group, free Boolean topological 
group, basically disconnected group, $F$-group, $F'$-group, $P$-space, selective ultrafilter}

\subjclass[2020]{54H11, 54G05, 03E35}

\address{Department of General Topology and Geometry, Faculty of Mechanics and  Mathematics, 
M.~V.~Lomonosov Moscow State University, Leninskie Gory 1, Moscow, 199991 Russia}

\email{o-sipa@yandex.ru, osipa@gmail.com}

\maketitle

There is a whole hierarchy of classical strong disconnectedness properties: a space $X$ is \emph{maximal} if it 
has no isolated points and any two disjoint subsets of $X$ have disjoint closures; $X$ is 
\emph{extremally disconnected} if any two disjoint open subsets of $X$ have disjoint closures (or, equivalently, 
the closure of any open set in $X$ is open); and $X$ is \emph{basically disconnected} if the closure of any 
cozero set in $X$ is open. This list is naturally continued with $F-$ and $F$-spaces: $X$ 
an \emph{$F$-space} if any two disjoint cozero sets are completely (=functionally) separated in $X$; and, 
finally, $X$ is an $F'$-space if any two disjoint cozero sets in $X$ have disjoint closures. Clearly, each of 
these properties (except maximality) is a relaxation of the preceding one. Abusing terminology, we will refer to 
all spaces listed above as ``strongly disconnected,'' although $F$- and $F'$-spaces may be connected.

It is well known that all strong disconnectednesses badly affect homogeneity properties (for example, a 
homogeneous space strongly disconnected in any of the above senses cannot contain an infinite compact subspace; 
see, e.g., \cite{R20}). Thus, it is natural to ask whether any of them can coexist with the property of being a 
topological group, which can be regarded as ultimate homogeneity. A ZFC-consistent answer was given by Malykhin, 
who proved the existence of many nondiscrete maximal topological groups under the assumption $\mathfrak 
p=\mathfrak c$ \cite{Malykhin1975}. Thus, the problem is: Does there exist in ZFC a nondiscrete strongly 
disconnected (in one of the above senses) topological group? or, more generally, under what assumptions does 
there exist a nondiscrete strongly disconnected topological group?

For basically disconnected groups, this problem has been more or less solved. As 
mentioned, in \cite{Malykhin1975} Malykhin constructed a consistent example of a nondiscrete maximal group under 
the assumption $\mathfrak p=\mathfrak c$, and in \cite{Malykhin1979} he proved that any maximal group must 
contain an open countable maximal subgroup. On the other hand, in \cite{RS} Reznichenko and the author 
proved that the existence of a countable nondiscrete maximal (or even only extremally disconnected) 
topological group implies the existence of rapid ultrafilters, and in \cite{Protasov} 
Protasov proved that it implies the existence of $P$-point ultrafilters (see also 
\cite[Corollary~5.21]{Zelenyuk}). Thus, the nonexistence of maximal groups is consistent with ZFC. 

The existence in ZFC of extremally disconnected groups is Arkhangelskii's celebrated 1967 problem. 
It has been solved (in the negative) for countable groups \cite{RS}, but the uncountable case still remains 
open. 

The situation with basically disconnected groups and groups which are $F$- or $F'$-spaces is different. On the 
one hand, clearly, in the class of countable spaces, all strong disconnectednesses (except maximality) are 
equivalent, so that nondiscrete strongly disconnected countable groups cannot exist in ZFC. However, 
since all cozero sets in a $P$-space are obviously clopen, it follows that any topological $P$-group is basically 
disconnected (and hence an $F$- and an $F'$-space). Thus, the correct question is: Does there exist in ZFC a 
topological group whose underlying space is basically disconnected (an $F$-space, 
an $F'$-space) but not a $P$-space? Note that all maximal 
$P$-groups are discrete and the existence of a nondiscrete extremally disconnected $P$-group is 
equivalent to that of measurable cardinals (see~\cite{A1}).

Yet another distinguishing feature of extremally disconnected groups is that any such group must contain an open 
Boolean subgroup, i.e., a subgroup in which all elements are of order~2 \cite{Malykhin1975}. 
This reduces the existence problem for extremally disconnected 
groups to the case of Boolean groups. However, basically disconnected groups, even those not 
being $P$-spaces, do not have this property: for example, if $G$ is a nondiscrete countable extremally 
disconnected group (which consistently exists) and $H$ is an arbitrary nondiscrete $P$-group, then $G\times H$ is 
basically disconnected \cite{Hindman} but not necessarily contains an open Boolean group.

In this paper we show that, nevertheless, the existence problem for paracompact 
finite-dimensional $F$-groups of countable pseudocharacter does reduce to the case of Boolean groups. 
We also prove that (1)~a free (or free Abelian) topological group is basically disconnected if and only if it is 
a $P$-space; (2)~for any Tychonoff space $X$,  the following conditions are equivalent: \textup{(i)}~the 
free topological group of $X$ is an $F'$-space, \textup{(ii)}~the free Abelian topological group of $X$ is an 
$F'$-space, \textup{(iii)}~$X$ is a $P$-space; and (3)~the existence of a free Boolean topological $F'$-group  
which is not a $P$-space is equivalent to the existence of a selective ultrafilter on~$\omega$. 

\vspace{3pt}

Throughout the paper by a space we mean a Tychonoff (= completely regular Hausdorff) topological space, unless 
otherwise stated, and assume all topological groups under consideration to be Hausdorff. When considering a 
group, we denote its identity (or zero, if the group is Abelian) element by 1 (by 0). 

A  subset of a space is a \emph{$P$-set} if every $G_\delta$-set containing it is a
neighborhood of it. A space in which every singleton is a $P$-set (or, equivalently, all $G_\delta$-sets are 
open) is called a \emph{$P$-space}. By a \emph{$P$-group} (an \emph{$F$-group}, an \emph{$F'$-group}) we mean a 
topological group whose underlying space is a $P$-space (an $F$-space, an $F'$-space). 

Let $X$ and $Y$ be arbitrary (not necessarily completely regular Hausdorff) topological spaces. A continuous 
surjection $p\colon X\to Y$ is said to be \emph{$\mathbb R$-quotient} if the continuity of any 
function $\varphi\colon Y\to \mathbb R$ is equivalent to the continuity of the composition $\varphi\circ p$, or,  
in other words, the topology of $Y$ is the finest completely regular topology with respect to which $p$ is 
continuous. In this case, $Y$ is called an \emph{$\mathbb R$-quotient space} of $X$ with 
respect to $p$, and its topology is called the \emph{$\mathbb R$-quotient topology}. For any 
topological space $X$ and any surjection $p\colon X\to Y$ onto a set $Y$, there exists a 
unique $\mathbb R$-quotient topology on $Y$~\cite{R-quotient}. Clearly, if $X$ and $Y$ are Tychonoff spaces and 
$q\colon X\to Y$ is a quotient map, then the $\mathbb R$-quotient topology on $Y$ with respect to $q$ coincides 
with the quotient topology. 

We recall that a \emph{seminorm}, or \emph{prenorm}, on a group $G$ is a function $\|{\boldsymbol\cdot}\|\colon 
G\to \mathbb R$ such that $\|1\|=0$, $\|gh\|\le \|g\|+\|h\|$, and $\|g^{-1}\|=\|g\|$ for all $g, h\in G$. A 
seminorm which takes the value 0 only at the identity element is called a \emph{norm}. The topology of any 
topological group is determined by continuous seminorms in the sense that all open balls with respect to all 
continuous seminorms form a base of neighborhoods of the identity element (see \cite[Sec.~3.3]{AT}). 

In what follows, we consider the free, free Abelian, and free Boolean topological 
groups of a space $X$ in the sense of Graev \cite{Graev1948}; we denote them by $F_G(X)$, $A_G(X)$, and $B_G(X)$, 
respectively. Given a space $X$ in which an arbitrary point $x_0$ is fixed, the group $F_G(X)$ is the unique 
topological group with identity element $1=x_0$ containing $X$ as a subspace and characterized by the property 
that any continuous map $f$ of $X$ to any topological group $G$ that takes $x_0$ to the identity element of $G$ 
can be extended to a continuous homomorphism $F_G(X)\to X$. The group $F_G(X)$ does not depend on the point 
$x_0$: different choices of $x_0$ yield topologically isomorphic groups. 

Graev's definition differs from Markov's classical definition of the free topological group $F(X)$ of $X$ 
in that the identity element of 
$F(X)$ does not belong to $X$ and all continuous maps of $X$ to topological groups can be extended to continuous 
homomorphisms of $F(X)$. Graev's free groups are a generalization of Markov's ones in the sense that any free 
topological group in the sense of Markov is a free topological group in the sense of Graev 
($F(X)$ is isomorphic to $F_G(X\oplus\{e\})$, where $\{e\}$ is a singleton). 

The free Abelian (Boolean) topological group is defined in a similar way with the difference that it is 
Abelian (Boolean) and only continuous maps to Abelian (Boolean) topological groups are required to extend to 
continuous homomorphisms. Note that algebraically the free Boolean group generated by a set $X$ is nothing but 
the set $[X]^{<\omega}$ of all finite subsets of $X$ with the operation of symmetric difference. 
Detailed information on free, free Abelian, and free Boolean topological groups can be found in \cite{VINITI, 
Axioms, ArXiv}.

We use the standard notations $\omega$ for the set of nonnegative integers, $\mathbb R$ for the set of 
real numbers, $\overline A$ for the closure of a set $A$, $|A|$ for the cardinality of $A$,  
$\langle A\rangle $ for the subgroup generated by a subset $A$ of a group, $\operatorname{Fix} f$ for the fixed 
point set of a map $f$, and $\beta f$ for the continuous extension of a continuous map $f$ of a topological space 
to the Stone--\v Cech compactification of this space. By $\psi(X)$ we denote the pseudocharacter of a space $X$. 
A topological group is of countable pseudocharacter if and only if its identity element is a $G_\delta$-set. 

By $\dim X$ (by $\dim_0 X$) we denote the covering dimension of $X$ in the sense of \v Cech 
(in the sense of Kat\v etov), that is, 
the least integer $n\ge -1$ 
such that any finite open (cozero) cover of $X$ has a finite open (cozero) refinement of order $n$, provided that 
such an integer exists (if it does not exist, then the covering dimension is $\infty$). 
It is well known that $\dim_0 X=\dim_0 
\beta X$ (see, e.g., \cite[Theorem~11.10]{Charalambous}) and that $\dim X=\dim_0 X$ for normal spaces 
(see, e.g., \cite[Proposition~11.2]{Charalambous}). By a \emph{zero-dimensional} space we mean a space 
in which clopen sets form a base of topology, that is, a space $X$ with $\operatorname{ind} X=0$. 

The study of homogeneity in extremally disconnected and $F$-spaces heavily employs ultrafilters. In 
this paper we use \emph{selective}, or \emph{Ramsey}, ultrafilters on $\omega$. One of the equivalent definitions 
of a Ramsey ultrafilter $\mathscr U$ is as follows (see \cite[Proof of Lemma~9.2]{Jech}): for any family $\{A_n: 
n \in\omega\}$, where $A_n\in \mathscr U$, there exists its \emph{diagonal quasi-intersection} in $\mathscr U$, 
that is, a set $D\in \mathscr U$ such that $j \in A_i$ whenever $i, j \in D$ and $i < j$. Both the existence and 
the nonexistence of selective ultrafilters are consistent with ZFC \cite[p.~76]{Jech}. We also mention rapid 
ultrafilters (their other names are semi-$Q$-points and weak $Q$-points); for our considerations, it only 
matters that the nonexistence of rapid ultrafilters is consistent with ZFC~\cite{Miller}. 

\smallskip

We begin with the following simple observation.

\begin{remark}
\label{Remark0}
If there exist no rapid ultrafilters, then all countable subsets of any $F'$-group 
are discrete (and closed). 

Indeed, let $G$ be an $F'$-group, and let $X\subset G$ be countable. Then $H=\langle X\rangle $ is a countable 
subgroup of $G$. If $A$ and $B$ are any disjoint open subsets of $H$, then, according to \cite[3B.4]{GJ}, there 
are disjoint cozero sets $U\supset A$ and $V\supset B$ in $G$. We have $\overline A\cap \overline 
B = \varnothing$, because $G$ is an $F'$-space. Thus, $H$ is a countable extremally disconnected group, and the 
existence of a nondiscrete group with these properties implies that of rapid ultrafilters~\cite{RS}. 
\end{remark}


In what follows, we use the facts and observations listed below. All of them are either well known 
or obvious (or both). 

\begin{fact}
\label{Fact1}
Any countable union of cozero sets is a cozero set~\cite[1.14]{GJ}.
\end{fact}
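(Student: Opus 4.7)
The plan is to give the standard construction directly, modeled on the argument in \cite[1.14]{GJ}. Let $\{U_n : n \in \omega\}$ be a countable family of cozero sets in a space $X$, so that for each $n$ there is a continuous function $g_n \colon X \to \mathbb R$ with $U_n = \{x \in X : g_n(x) \neq 0\}$. I would combine all the $g_n$ into a single continuous function whose cozero set is $\bigcup_n U_n$.

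First, I would normalize. The function $f_n = |g_n|/(1+|g_n|)$ is continuous, takes values in $[0,1]$, and satisfies $f_n(x) = 0$ if and only if $g_n(x) = 0$; thus $U_n = \{x : f_n(x) \neq 0\}$ as well. Now set
\[
f(x) = \sum_{n \in \omega} \frac{f_n(x)}{2^{n+1}}.
\]
Since $0 \le f_n \le 1$ for each $n$, the Weierstrass $M$-test shows that this series converges uniformly on $X$, so $f$ is continuous, and clearly $0 \le f \le 1$.

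Finally I would check that the cozero set of $f$ is exactly $\bigcup_n U_n$. If $x \in U_n$ for some $n$, then $f_n(x) > 0$ and all other summands are nonnegative, hence $f(x) > 0$. Conversely, if $x \notin \bigcup_n U_n$, then $f_n(x) = 0$ for every $n$, so $f(x) = 0$. Therefore $\bigcup_n U_n = \{x : f(x) \neq 0\}$ is a cozero set.

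There is really no obstacle here; the only two points that deserve explicit verification are the uniform convergence of the series (needed for continuity of $f$) and the equivalence of cozero sets under the normalization $g_n \mapsto |g_n|/(1+|g_n|)$. Both are routine.
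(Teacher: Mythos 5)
Your proof is correct and is essentially the standard argument from the cited reference \cite[1.14]{GJ} (the paper itself gives no proof, only the citation): normalize each witnessing function into $[0,1]$, form the uniformly convergent weighted series, and observe that the zero set of the sum of nonnegative functions is the intersection of their zero sets. Nothing further is needed.
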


\begin{fact}
\label{Fact2}
A space $X$ is an $F$-space if and only if so is $\beta X$~\cite[14.25]{GJ}.
\end{fact}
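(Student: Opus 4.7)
The plan is to exploit two features of the Stone--\v Cech compactification: $X$ is dense and $C^*$-embedded in $\beta X$, and each bounded continuous function on $X$ extends uniquely to $\beta X$. Write the $F$-space condition in its functional form: $X$ is an $F$-space iff every pair of bounded $f,g\in C^*(X)$ with $fg=0$ can be separated by some $h\in C^*(X)$ with $h=0$ on $\operatorname{coz} f$ and $h=1$ on $\operatorname{coz} g$. Bounded representatives cost us nothing, since replacing $f$ by $f/(1+|f|)$ preserves cozero sets.

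For the implication ``$X$ is an $F$-space $\Rightarrow$ $\beta X$ is an $F$-space,'' I would start with disjoint cozero sets $U'=\operatorname{coz} F$ and $V'=\operatorname{coz} G$ in $\beta X$ ($F,G\in C(\beta X)$ bounded). Restricting to $X$, the functions $f=F|_X$ and $g=G|_X$ have disjoint cozero sets $U=U'\cap X$ and $V=V'\cap X$, because $U\cap V\subseteq (U'\cap V')\cap X=\varnothing$. The $F$-property of $X$ provides $h\in C^*(X)$ with $h\equiv 0$ on $U$ and $h\equiv 1$ on $V$. Extending to $\beta h\in C(\beta X)$, the key observation is that $U'\cap X=U$ is dense in $U'$ (since $U'$ is open in $\beta X$ and $X$ is dense), so $\beta h=0$ on $U'$ by continuity, and similarly $\beta h=1$ on $V'$.

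For the reverse implication, take disjoint cozero sets in $X$, written as $\operatorname{coz} f$ and $\operatorname{coz} g$ with bounded $f,g\in C^*(X)$ satisfying $fg\equiv 0$ on $X$. Then $\beta f\cdot\beta g$ is a continuous extension of $fg=0$ to $\beta X$, so by density and uniqueness $\beta f\cdot\beta g\equiv 0$ on $\beta X$; that is, $\operatorname{coz}(\beta f)$ and $\operatorname{coz}(\beta g)$ are disjoint cozero sets in $\beta X$ containing the original $\operatorname{coz} f$ and $\operatorname{coz} g$. The $F$-property of $\beta X$ separates them by some $H\in C(\beta X)$, and $H|_X$ separates $\operatorname{coz} f$ from $\operatorname{coz} g$ in $X$.

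The only subtle point is the density argument in the first direction: one must argue that a continuous extension which vanishes on $U'\cap X$ vanishes on all of $U'$, which requires that $U'\cap X$ be dense in the open set $U'$. This is immediate once the problem is phrased in terms of the open cozero sets rather than their closures, so I would expect no real obstacle beyond being careful that we work with bounded representatives so that $\beta$-extensions are available throughout.
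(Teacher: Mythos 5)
Your argument is correct, and both implications are sound: the density of $X$ in $\beta X$ does give $U'\subseteq\overline{U'\cap X}$ for open $U'$, so the extension $\beta h$ really is $0$ on all of $U'$ and $1$ on all of $V'$; and in the converse direction the identity $\beta f\cdot\beta g\equiv 0$ follows from uniqueness of continuous extensions off a dense subspace, so the cozero sets of $\beta f$ and $\beta g$ are disjoint and the restriction of a separating function on $\beta X$ separates the original pair. The reduction to bounded representatives is also handled correctly. The paper itself offers no proof of this fact --- it is quoted verbatim from Gillman--Jerison, Theorem~14.25 --- so there is nothing to match your argument against; for what it is worth, the proof in that reference runs through a different equivalent characterization (an $F$-space is one in which every cozero set is $C^*$-embedded, and this property is transferred between $X$ and $\beta X$), whereas yours works directly with the complete-separation definition used in this paper. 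Your version is more elementary and self-contained, at the cost of not yielding the other equivalences in GJ's theorem; either is perfectly adequate here.
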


\begin{fact}
\label{Remark1}
Extremal disconnectedness, basic disconnectedness, and the property of being an $F'$-space are preserved by open 
continuous maps. {\rm (This easily follows from the equality  $\overline A=f(\overline{f^{-1}(A)})$, which holds 
for any  open map $f\colon X\to Y$ and any $A\subset Y$.)} 
\end{fact}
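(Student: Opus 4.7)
The plan is to exploit the given closure identity together with its preimage version. Assuming $f\colon X\to Y$ is a continuous open surjection (passing to the image otherwise), I would first verify that $f^{-1}(\overline A)=\overline{f^{-1}(A)}$ for every $A\subset Y$: the inclusion $\supset$ is pure continuity, while for $\subset$ any neighborhood $V$ of $x\in f^{-1}(\overline A)$ has an open image $f(V)$ that meets $A$, producing a point of $V$ in $f^{-1}(A)$. Applying $f$ and using surjectivity recovers the stated identity $\overline A=f(\overline{f^{-1}(A)})$.

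For extremal disconnectedness and basic disconnectedness the argument is then immediate: given an open (respectively, cozero) set $U\subset Y$, continuity makes $f^{-1}(U)$ open (respectively, cozero) in $X$, the hypothesis on $X$ makes $\overline{f^{-1}(U)}$ open, and then openness of $f$ makes $f(\overline{f^{-1}(U)})=\overline U$ open in $Y$.

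For the $F'$ case, let $A$ and $B$ be disjoint cozero sets in $Y$; their preimages $f^{-1}(A)$ and $f^{-1}(B)$ are disjoint cozero sets in $X$, so the $F'$ property in $X$ gives $\overline{f^{-1}(A)}\cap\overline{f^{-1}(B)}=\varnothing$. Rewriting this intersection as $f^{-1}(\overline A)\cap f^{-1}(\overline B)=f^{-1}(\overline A\cap\overline B)$ and invoking surjectivity yields $\overline A\cap\overline B=\varnothing$, as required.

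The only step that warrants real care is this last one: the image-side identity alone does not convert disjointness in $X$ into disjointness in $Y$, because $f$ can collapse distinct points, and one really needs the preimage-side equality $f^{-1}(\overline A)=\overline{f^{-1}(A)}$. That is why I would write the elementary verification of that form explicitly at the start, even though the author chooses to state the equivalent image-side version.
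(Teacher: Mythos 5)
Your proof is correct and follows essentially the approach the paper indicates: the whole content is the standard closure identity for continuous open surjections, applied to (pre)images of open and cozero sets. Your observation that the $F'$ case really uses the preimage form $f^{-1}(\overline A)=\overline{f^{-1}(A)}$ (equivalently, the inclusion $f^{-1}(\overline A)\subset\overline{f^{-1}(A)}$ supplied by openness), rather than the image-side identity verbatim, is accurate and well placed, but it is only a reformulation of the same lemma, not a different route.
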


\begin{fact}
\label{Remark2}
If $G$ is a topological group, $H$ is its subgroup, and $G/H$ is the quotient space of left or right cosets, then 
the canonical quotient map $G\to G/H$ is open (see~\cite{AT}). 
\end{fact}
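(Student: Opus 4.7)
The plan is to verify openness of the canonical projection directly from the definition of the quotient topology, combined with the fact that in a topological group every translation is a self-homeomorphism.

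First, I would fix an arbitrary open set $U\subseteq G$ and compute the saturation $\pi^{-1}(\pi(U))$ under the canonical map $\pi\colon G\to G/H$. For the space of right cosets, an element $g\in G$ satisfies $\pi(g)\in \pi(U)$ precisely when $g\in uH$ for some $u\in U$, so $\pi^{-1}(\pi(U))=UH=\bigcup_{h\in H} Uh$. The analogous calculation for left cosets gives $\pi^{-1}(\pi(U))=HU=\bigcup_{h\in H} hU$.

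Next, I would invoke the fact that, in any topological group, the right translation $g\mapsto gh$ and the left translation $g\mapsto hg$ are homeomorphisms of $G$. Consequently each $Uh$ (respectively $hU$) is open in $G$, and the union $\pi^{-1}(\pi(U))$ is therefore open.

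Finally, by the very definition of the quotient topology on $G/H$, a subset $V\subseteq G/H$ is open if and only if $\pi^{-1}(V)$ is open in $G$. Applying this to $V=\pi(U)$ yields that $\pi(U)$ is open, so $\pi$ is an open map. There is no real obstacle here; the statement reduces to a one-line consequence of translation invariance of the group topology together with the definition of the quotient.
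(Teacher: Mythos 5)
Your argument is correct and is exactly the standard proof (the paper gives no proof of its own, only a citation to Arhangel'skii--Tkachenko, where the same saturation-plus-translation argument appears): $\pi^{-1}(\pi(U))=UH$ is a union of translates of $U$, hence open, and the definition of the quotient topology does the rest. Nothing is missing.
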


\begin{fact}
\label{Remark00}
The free Abelian topological group $A_G(X)$ is the topological 
quotient of $F_G(X)$ by the commutator subgroup, and the free Boolean topological group $B_G(X)$ 
is the topological quotient of $A_G(X)$ by the subgroup $A_G(2X)$ of squares. 
{\rm (For the case of $A_G(X)$, see \cite{Markov}. The case of $B_G(X)$ is similar.)}
\end{fact}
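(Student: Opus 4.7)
The plan is to verify in each case that the indicated quotient satisfies the universal property characterizing the target free group; uniqueness will then force the isomorphism. For the first claim, I would start by noting that $[F_G(X),F_G(X)]$ is normal in $F_G(X)$, so $Q:=F_G(X)/[F_G(X),F_G(X)]$ with the quotient topology is a topological Abelian group, and the composition $\iota\colon X\hookrightarrow F_G(X)\twoheadrightarrow Q$ is continuous and sends $x_0$ to the identity of $Q$. Given any continuous map $f\colon X\to H$ into an Abelian topological group with $f(x_0)=0$, the universal property of $F_G(X)$ yields a continuous homomorphism $\tilde f\colon F_G(X)\to H$ extending $f$; since $H$ is Abelian, $\tilde f$ annihilates $[F_G(X),F_G(X)]$ and thus factors through $Q$ as a continuous homomorphism extending $\iota$.

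For the second claim the argument is parallel. The subgroup $A_G(2X)$ of squares in $A_G(X)$ is automatically normal because $A_G(X)$ is Abelian, and every coset in the quotient has order $2$, so the quotient is a Boolean topological group. Applying the same extension-and-factorization procedure — this time to continuous maps into Boolean topological groups using the universal property of $A_G(X)$ — identifies the quotient with $B_G(X)$. In both cases, once the universal property is established, Facts \ref{Remark1} and \ref{Remark2} can be invoked downstream to transfer disconnectedness properties from $F_G(X)$ to $A_G(X)$ and from $A_G(X)$ to $B_G(X)$, which is the reason this fact is needed.

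The one substantive point in both arguments is verifying that $X$ embeds into the quotient as a \emph{topological} subspace rather than merely set-theoretically; without this the universal property only determines the free group up to a continuous bijection, not a homeomorphism. This is the content of Markov's theorem in the $A_G(X)$ case, cited in the parenthetical remark, and proceeds by extending each continuous pseudometric on $X$ to a continuous invariant seminorm on $F_G(X)$ and observing that the seminorm vanishes on commutators, hence descends to $Q$ and still restricts to the original pseudometric on $\iota(X)$. For $B_G(X)$ the same mechanism works: one constructs, for each continuous pseudometric $\rho$ on $X$, a continuous invariant seminorm on the Boolean quotient whose restriction to $X$ equals $\rho$, using Graev's maximal-extension construction adapted to the Boolean setting. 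Since continuous pseudometrics determine the topology of $X$, this completes the identification in the Boolean case as well.
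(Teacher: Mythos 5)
Your overall strategy --- identify each quotient via the universal property, with the topological embedding of $X$ into the quotient as the one substantive point --- is the standard argument and is exactly what the paper's bare citation of Markov points to; the soft parts (normality of the kernels, the factorization of $\tilde f$ through $Q$, Booleanness of the second quotient) are fine. Two corrections, though. First, your mechanism for the embedding is misstated: the Graev extension of a metric $d$ on $X$ to an invariant seminorm on $F_G(X)$ is a \emph{norm} and certainly does not vanish on commutators (e.g.\ $\|xyx^{-1}y^{-1}\|_d>0$ for distinct $x,y$ with $d(x,y)>0$), so it does not descend to $Q$. What you need is the Graev extension of $d$ computed in the free \emph{Abelian} (resp.\ Boolean) group, pulled back to $F_G(X)$ (resp.\ $A_G(X)$) along the canonical homomorphism; that pullback is a continuous invariant seminorm vanishing on the kernel and restricting to $d$ on $X$. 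Alternatively, once Markov's theorem is granted ($X$ is a subspace of $A_G(X)$), you can avoid seminorms altogether: the canonical continuous bijective homomorphism $Q\to A_G(X)$ restricts to the identity on $X$, so the topology that $Q$ induces on $X$ is squeezed between the original one from above (via $F_G(X)$) and the original one from below (via $A_G(X)$), hence equals it. Second, under the paper's standing convention that all topological groups are Hausdorff, you must check that $[F_G(X),F_G(X)]$ is closed in $F_G(X)$ and that the subgroup of squares is closed in $A_G(X)$ before $Q$ qualifies as a legitimate target for the universal property of $A_G(X)$ (resp.\ $B_G(X)$); this is immediate, since each is the kernel of the canonical continuous homomorphism onto the Hausdorff group $A_G(X)$ (resp.\ $B_G(X)$), but it should be said. (A cosmetic point: cosets in the Boolean quotient have order \emph{at most} $2$, not exactly $2$.)
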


\begin{fact}
\label{fact_added}
If $Y$ is an $\mathbb R$-quotient space of  $X$, then the groups $F_G(Y)$, $A_G(Y)$, and $B_G(Y)$ are topological 
quotients of $F_G(X)$, $A_G(X)$, and $B_G(X)$, respectively (this was proved in \cite{Okunev} 
for the case of the free topological group; the remaining cases are similar). 
\end{fact}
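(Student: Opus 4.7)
The plan is to treat all three cases uniformly via the universal property of free groups, so let me describe it for $F_G$; the cases of $A_G$ and $B_G$ are identical after replacing ``topological group'' with ``Abelian topological group'' or ``Boolean topological group'' throughout. The $\mathbb R$-quotient map $p\colon X\to Y$ induces a continuous homomorphism $\hat p\colon F_G(X)\to F_G(Y)$ by the universal property, and $\hat p$ is surjective because $p(X)=Y$ generates $F_G(Y)$ algebraically. Writing $\sigma$ for the given topology on $F_G(Y)$ and $\tau$ for the quotient topology induced by $\hat p$, the inclusion $\sigma\subseteq\tau$ is immediate from the continuity of $\hat p$; the real task is $\tau\subseteq\sigma$.

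The first step I would carry out is a small transfer lemma: for any Tychonoff space $Z$, a function $f\colon Y\to Z$ is continuous if and only if $f\circ p\colon X\to Z$ is continuous. This reduces to the $\mathbb R$-quotient property by embedding $Z$ into some product $\mathbb R^I$ and checking continuity coordinatewise, since a map into $\mathbb R^I$ is continuous precisely when all its scalar components are.

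Next, because $\hat p$ is continuous and $(F_G(Y),\sigma)$ is Hausdorff, $\ker\hat p$ is closed in $F_G(X)$; hence $(F_G(Y),\tau)$, which is just the topological quotient group $F_G(X)/\ker\hat p$, is a Hausdorff topological group, in particular Tychonoff. Applying the transfer lemma to the inclusion $\iota\colon Y\hookrightarrow(F_G(Y),\tau)$, whose composition with $p$ equals $\hat p\!\restriction\! X$ and is therefore continuous into $(F_G(Y),\tau)$, we conclude that $\iota$ itself is continuous.

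Finally, by the universal property of $(F_G(Y),\sigma)$, the continuous map $\iota$ extends to a continuous homomorphism $\varphi\colon(F_G(Y),\sigma)\to(F_G(Y),\tau)$; since $\varphi$ is the identity on the generating set $Y$, it is the identity on all of $F_G(Y)$. So $\mathrm{id}\colon(F_G(Y),\sigma)\to(F_G(Y),\tau)$ is continuous, which yields $\tau\subseteq\sigma$ and completes the proof. The only step requiring any thought is the transfer lemma, but this is not really an obstacle once one recalls that continuity of a map into a Tychonoff space is detected by postcomposition with continuous real-valued functions; everything else is a formal consequence of the appropriate universal property.
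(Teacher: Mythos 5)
Your proof is correct: the transfer lemma (continuity into a Tychonoff space is detected by composing with $p$, via an embedding into a power of $\mathbb R$), the observation that the quotient topology on $F_G(Y)$ induced by $\hat p$ is a Hausdorff (hence Tychonoff) group topology, and the final appeal to the universal property together give exactly the required coincidence of the two topologies. The paper itself offers no argument beyond a citation to Okunev, and your argument is the standard one that that citation stands in for, so there is nothing of substance to compare.
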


\begin{fact}
\label{Remark3}
For any space $X$, the following conditions are equivalent: 
\begin{enumerate}
\item
$X$ is a $P$-space; 
\item $F_G(X)$ is a $P$-space; 
\item
$A_G(X)$ is a $P$-space; 
\item 
$B_G(X)$ is a $P$-space.
\end{enumerate}
{\rm (To show (1), it suffices to note that if $X$ is a $P$-space, then all $G_\delta$-sets in $F_G(X)$ form a 
group topology on the free group which is finer than the topology of $F_G(X)$ but still induces the original 
topology of $X$ on $X$. Since the topology of $F_G(X)$ is the finest group topology with the latter property, 
it follows that all $G_\delta$-sets are open in $F_G(X)$. Obviously, the property of being a $P$-space is 
hereditary. The assertions about $A_G(X)$ and $B_G(X)$ are proved in a similar way.)} 
\end{fact}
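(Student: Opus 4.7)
The plan is to prove the forward implications $(1)\Rightarrow(2),(3),(4)$ by a uniform ``$P$-modification'' argument, while the reverse implications follow immediately because $X$ embeds as a subspace of each of $F_G(X)$, $A_G(X)$, $B_G(X)$ (by Graev's construction) and being a $P$-space passes to subspaces.

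For $(1)\Rightarrow(2)$, let $\tau$ denote the topology of $F_G(X)$ and let $\tau_P$ be the topology on the same underlying group having all $G_\delta$-sets of $\tau$ as a base (this family is closed under finite intersections and covers the group, so it is a genuine base). The key step is to verify that $\tau_P$ is a group topology. Translations are $\tau_P$-homeomorphisms since they send $G_\delta$-sets to $G_\delta$-sets, so it suffices to check continuity of multiplication and inversion at the identity: if $W=\bigcap_{n<\omega}W_n$ is a $G_\delta$-neighborhood of $1$ and $V_n$ are $\tau$-open symmetric neighborhoods of $1$ with $V_nV_n\subseteq W_n$, then $V=\bigcap_n V_n$ is a $\tau_P$-neighborhood of $1$ with $VV\subseteq W$ and $V^{-1}=V$. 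The trace of $\tau_P$ on $X$ is generated by traces of $G_\delta$-sets of $\tau$, hence by $G_\delta$-sets of the original topology of $X$, which under hypothesis $(1)$ coincides with the original topology itself. Thus $\tau_P$ is a group topology on $F_G(X)$ inducing the original topology on $X$; by the universal property of $F_G(X)$, the continuous inclusion $X\hookrightarrow(F_G(X),\tau_P)$ extends to a continuous homomorphism $(F_G(X),\tau)\to(F_G(X),\tau_P)$ that is the identity algebraically. Therefore $\tau_P\subseteq\tau$, so every $G_\delta$-set of $\tau$ is already $\tau$-open, and $F_G(X)$ is a $P$-space. The proofs for $A_G(X)$ and $B_G(X)$ are verbatim, invoking the universal property in the Abelian, respectively Boolean, category.

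The only genuine step is the verification that $\tau_P$ is a group topology; the underlying point is that countable intersections of open neighborhoods of $1$ remain ``well-behaved'' under the group operations. Once this is established, the universal property of the relevant free topological group does all the remaining work, and no set-theoretic hypotheses are needed.
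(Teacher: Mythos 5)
Your proof is correct and takes essentially the same route as the paper: both pass to the $G_\delta$-modification of the topology of the free group, check that it is a group topology inducing the original topology on the $P$-space $X$, and conclude from the maximality of the free group topology (which you derive explicitly from the universal property) that the two topologies coincide. The reverse implications are handled identically in both, via heredity of the $P$-space property to the subspace $X$.
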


\begin{remark}
\label{Remark4}
If a topological group $G$ is not a $P$-space, then there exist neighborhoods $ U_i$, $i\in \omega$, of 
the identity element  such that $U_{n+1}\cdot U_{n+1}\subset U_n$ and $U_n=U_n^{-1}$ for all 
$n\in \omega$ and the identity element is not in the interior of the intersection $H=\bigcap_{n\in \omega}U_n$. 
The set $H$ is closed (because if $x\notin U_n$ for some $n\in \omega$, then 
$x\cdot U_{n+1}\cap U_{n+1}=\varnothing$), and this is a subgroup (by construction). Clearly, any subgroup 
with nonempty interior must be open; therefore, $H$ is a nowhere dense closed subgroup of~$G$. 

If, in addition, 
$G$ is Lindel\"of, then, for each $n$, there exists neighborhoods $V_{n,i}$, $i\in \omega$, of 
the identity element  with the following properties: (a)~$V_{n,0}\subset U_n$, $V_{n,i+1}\cdot 
V_{n,i+1}\subset V_{n,i}$, and $V_{n,i}=V_{n,i}^{-1}$ for all $n\in \omega$; (b)~for any $x\in G$ and any $i\in 
\omega$, there exists a $j\in \omega$ such that $x^{-1}\cdot V_{n,j}\cdot x\subset V_{n,i}$ 
(see \cite[Propositions~3.4.6 and~3.4.10, Lemma~3.4.14]{AT}). Setting $V_n=\bigcap_{k,i\le n}V_{k,i}$ for $n\in 
\omega$, we obtain a sequence of neighborhoods $V_n$ of the identity element such that 
$V_{n+1}\cdot V_{n+1}\subset V_n$ and $U_n=U_n^{-1}$ for all 
$n\in \omega$ and $N=\bigcap_{n\in \omega}V_n$ is a nowhere dense closed normal subgroup of~$G$.  
\end{remark}

The following theorem is the first main result of this paper.

\begin{theorem}
\label{Theorem1}
Any paracompact topological $F$-group $G$ such that $\dim G<\infty$ and $\psi (G)\le \omega$ contains 
an open Boolean subgroup with the same properties. 
\end{theorem}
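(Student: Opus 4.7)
The plan is to reduce the theorem to constructing a symmetric open neighborhood of $1$ with the right algebraic structure, and then to carve it down to a Boolean one, using all four hypotheses in concert. Observe first that any open subgroup $H$ of $G$ is automatically clopen, and therefore inherits the $F$-space property (cozero sets in a clopen subspace extend to cozero sets of the ambient space), paracompactness, countable pseudocharacter, and finite covering dimension. So the entire theorem reduces to producing an open subgroup of $G$ all of whose elements satisfy $x^{2}=1$ and pairwise commute; the subgroup they generate will then be Boolean.

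The trivial case is $G$ a $P$-group, in which case $\psi(G)\le\omega$ forces $G$ discrete and $\{1\}$ works. Otherwise Remark~\ref{Remark4} applies, and by further shrinking I obtain a decreasing base $(V_n)$ of symmetric open cozero neighborhoods of $1$ with $V_{n+1}\cdot V_{n+1}\subset V_n$ and $\bigcap_n V_n=\{1\}$. By Fact~\ref{Fact2} together with the equalities $\dim X=\dim_0 X$ (for normal $X$) and $\dim_0 X=\dim_0\beta X$, the space $\beta G$ is a finite-dimensional compact $F$-space, which I intend to exploit as a compact source of dimension-theoretic rigidity.

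The decisive step---and the main obstacle---is to refine the $V_n$ to a base of clopen subgroups at $1$. My plan is to play the finite-dimensionality against the $F$-property: in a paracompact finite-dimensional $F$-space, a suitable cozero neighborhood of $1$ should be clopen modulo a boundary of strictly smaller dimension, so an induction on $\dim G$ combined with the symmetric-power construction of Remark~\ref{Remark4} (applied now to upgrade clopen neighborhoods into clopen subgroups) should produce the desired base. This is where the finite-dimensionality hypothesis earns its keep; the $F$-property is too weak on its own, as is visible from the comparison with basically disconnected groups, where the analogous base of clopen subgroups is nearly automatic.

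Once a clopen subgroup base at $1$ is available, I would follow a Malykhin-style argument. Applied to the squaring map $\sigma\colon x\mapsto x^{2}$ and its fibre $B=\sigma^{-1}(1)$, the argument runs as follows: if $B$ were not a neighborhood of $1$, then inside a small clopen subgroup $W$ the pairing $x\leftrightarrow x^{-1}$ would split $W\setminus B$ into two disjoint cozero sets both having $1$ in their closure, contradicting the $F'$-property (which is implied by the $F$-property). Hence $B$ contains a clopen subgroup; the same style of argument, applied to the commutator map $(x,y)\mapsto[x,y]$ inside this subgroup, should eliminate non-commuting pairs near $(1,1)$ and leave an open subgroup of pairwise commuting involutions, i.e.\ a Boolean one.
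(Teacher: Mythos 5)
Your reduction to showing that the set $F=\{x\in G:\ x^2=1\}$ is a neighborhood of $1$ is the right target (and your endgame could be shortened: a group of exponent $2$ is automatically Abelian, so no separate commutator argument is needed once one has an open $U\ni 1$ with $U^2\subset F$). But both steps you rely on to reach that target are asserted rather than proved, and the second is exactly where the difficulty of the theorem sits. First, you propose to manufacture a base of clopen subgroups at $1$; this is far stronger than the theorem's conclusion, you offer no argument beyond ``an induction on $\dim G$ \dots should produce the desired base,'' and no such base is needed or constructed in the paper. Second, and more seriously, your Malykhin-style step claims that the inversion pairing ``would split $W\setminus B$ into two disjoint cozero sets both having $1$ in their closure.'' For a fixed-point-free involution on an open set, precisely this kind of equivariant splitting can fail in the absence of dimension hypotheses (the antipodal map on a sphere cannot split the sphere into two disjoint open sets it interchanges, by connectedness); controlling this obstruction is the entire role of the assumption $\dim G<\infty$, and it cannot be waved through.

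The paper's proof confronts this point head-on. Using $\psi(G)\le\omega$ it builds a continuous norm and hence a continuous function $\varphi(x)=\|x^2\|$, so that $F=\varphi^{-1}(\{0\})$ is a zero set and $C=G\setminus F$ is a cozero set on which the inversion $h$ acts without fixed points. Because $G$ is a paracompact $F$-space, $C$ is paracompact, finite-dimensional, and $C^*$-embedded in $G$, so that $\overline C=\beta C$ inside $\beta G$; van Douwen's theorem then guarantees that $\beta(h|_C)$ is still fixed-point free, whence $F$ is disjoint from $\overline C$ and therefore open in $G$ (the Hart--Vermeer theorem that fixed-point sets of autohomeomorphisms of finite-dimensional compact $F$-spaces are $P$-sets serves the same dimension-theoretic purpose). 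Only after $F$ is known to be open does one take $U$ with $U^2\subset F$ and verify that $\langle U\rangle$ is an open Boolean subgroup. Your proposal is missing this dimension-theoretic core, which is the actual content of the theorem.
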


\begin{proof}
Consider the automorphism $h\colon G\to G$ defined by $h(x)=x^{-1}$ for $x\in G$. Extending it to $\beta G$, we 
obtain an autohomeomorphism $\beta h\colon \beta G\to \beta G$ which takes $\beta G\setminus G$ to $\beta 
G\setminus G$. Since $\dim_0 \beta G <\infty$ and $\beta G$ is a compact $F$-space, it follows that 
$\operatorname{Fix} \beta h$ is a $P$-set in $\beta G$ \cite{Hart-Vermeer}. 
In particular, any zero set in $\beta G$ containing $\operatorname{Fix} \beta h$ is a neighborhood of 
$\operatorname{Fix} \beta h$. 

Using the assumption $\psi(G)\le \omega$, we can find a sequence $(U_n)_{n\in \omega}$ of 
neighborhoods of $0$ in $G$ such that $ \bigcap_{n\in \omega} U_n=\{0\}$, $U_{n+1}\cdot U_{n+1}\subset U_n$, and 
$U_n=U^{-1}_n$ for all $n\in \omega$. There exists a norm $\|{\boldsymbol\cdot}\|$ on $G$ such that 
$$
\{x\in G: \|x\|<1/2^n\}\subset U_n\subset \{x\in G: \|x\|\le 2/2^{n}\}
$$ 
for every $n\in \omega$ (see, e.g., \cite[Lemma~3.3.10]{AT}). Consider the continuous function 
$\varphi\colon G\to \mathbb R$ defined by $\varphi(x)=\|x^2\|$ for $x\in G$. Note that 
$\operatorname{Fix} h=\varphi^{-1}(\{0\})$. 

Let $F=\varphi^{-1}(\{0\})$, and let $C=G\setminus F$. Since $C$ is a cozero set (and hence an $F_\sigma$-set)  
in the paracompact $F$-space $G$, it follows that $C$ is paracompact \cite[Theorem~5.1.28]{Engelking} and 
$C^*$-embedded in $G$ \cite[Theorem~14.25]{GJ} (the latter implies that 
$\beta C$ is the closure $\overline C$ of $C$ in $\beta G$). 
According to Corollary~11.21 in \cite{Charalambous}, we have $\dim C <\infty$. Therefore, the extension 
$\beta (h|_C)$ of the  fixed-point free 
autohomeomorphism $h|_C$ to $\overline C=\beta C$ has no fixed points~\cite{vanDouwen}.  
It follows that $F$ is open in~$G$. 

Thus, $F$ is an open neighborhood of 1 in $G$. Let $U$ be an open neighborhood of 1 such that $U^2\subset F$. 
Then the subgroup $\langle U\rangle $ generated by $U$ is Boolean. Indeed, if $x,y\in \langle U\rangle$, then 
$x,y,x\cdot y\in F$, whence $x\cdot y= y^{-1}\cdot x^{-1}=y\cdot x$. Thus, the subgroup $\langle U\rangle$ is 
Abelian. Since it is generated by elements of order 2 and has nonempty interior, it easily follows that 
$\langle U\rangle$ is an open (and hence clopen) Boolean subgroup of $G$.
It remains to note that all properties of 
$G$ listed in the statement of the theorem are inherited by clopen subspaces.
\end{proof}

\begin{corollary}
The existence of a nondiscrete paracompact topological $F$-group $G$ with $\dim G<\infty$ and $\psi(G)\le 
\omega$ is equivalent to the existence of a nondiscrete Boolean topological group 
with the same properties. 
\end{corollary}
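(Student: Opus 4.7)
The plan is to derive the corollary directly from Theorem~\ref{Theorem1}, observing that the two statements differ only in the word ``Boolean'' and in the trivial direction of an equivalence.

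The backward implication is immediate: if a nondiscrete Boolean topological group with the listed properties (paracompactness, being an $F$-group, finite covering dimension, countable pseudocharacter) exists, then this group itself witnesses the existence of a nondiscrete paracompact topological $F$-group $G$ with $\dim G<\infty$ and $\psi(G)\le\omega$, since being Boolean is an additional restriction rather than a relaxation.

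For the forward implication, I would take a nondiscrete paracompact $F$-group $G$ with $\dim G<\infty$ and $\psi(G)\le \omega$ and apply Theorem~\ref{Theorem1} to obtain an open Boolean subgroup $H$ enjoying all the same properties. The only point that needs to be checked is that $H$ is itself nondiscrete. Suppose, for contradiction, $H$ were discrete; then $\{1\}$ would be open in $H$, and since $H$ is open in $G$, the singleton $\{1\}$ would be open in $G$ as well. By homogeneity of the topological group $G$ (left translations are homeomorphisms), every singleton in $G$ would then be open, so $G$ would be discrete, contradicting our hypothesis.

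There is no real obstacle: the work has already been done in Theorem~\ref{Theorem1}, and the corollary is just a packaging of that theorem as an equivalence, using the elementary facts that the defining properties pass from $G$ to its open subgroup $H$ (already built into Theorem~\ref{Theorem1}) and that an open subgroup of a nondiscrete topological group is nondiscrete.
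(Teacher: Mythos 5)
Your proof is correct and is exactly the argument the paper intends: the corollary is stated without proof as an immediate consequence of Theorem~\ref{Theorem1}, with the backward direction trivial and the forward direction supplied by the open Boolean subgroup, which is nondiscrete because an open discrete subgroup would force the whole group to be discrete. Your explicit check of that last point is a reasonable bit of added care.
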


\begin{theorem}
\label{Theorem2}
Any Lindel\"of basically disconnected topological group either is a $P$-space 
or has a nondiscrete topological 
quotient of countable pseudocharacter containing an open basically disconnected Boolean subgroup. 
\end{theorem}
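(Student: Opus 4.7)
The plan is to dispose of the $P$-space case trivially and otherwise to construct a suitable quotient and invoke Theorem~\ref{Theorem1}. So assume that $G$ is Lindel\"of and basically disconnected but not a $P$-space.

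First, I would apply Remark~\ref{Remark4} to obtain a decreasing sequence $(V_n)_{n\in\omega}$ of symmetric open neighborhoods of the identity with $V_{n+1}\cdot V_{n+1}\subset V_n$ such that $N=\bigcap_{n\in\omega}V_n$ is a closed nowhere dense normal subgroup of~$G$. Let $\pi\colon G\to Q=G/N$ denote the canonical projection. Then $Q$ is a Hausdorff topological group; by Facts~\ref{Remark1} and~\ref{Remark2} the open continuous map $\pi$ transports basic disconnectedness to~$Q$; and $Q$ is Lindel\"of as a continuous image of~$G$ and nondiscrete, since $\{N\}$ being open in~$Q$ would force the nowhere dense set $N=\pi^{-1}(\{N\})$ to be open in~$G$. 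For countable pseudocharacter, the inclusion $V_n\cdot N\subset V_n\cdot V_n\subset V_{n-1}$ gives $\bigcap_n V_n\cdot N\subset N$, whence $\bigcap_n\pi(V_n)=\{N\}$, so the identity of~$Q$ is a $G_\delta$-set.

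Next I would verify the hypotheses of Theorem~\ref{Theorem1} for~$Q$. Being regular and Lindel\"of, $Q$ is paracompact (and normal). Basic disconnectedness implies that $Q$ is an $F$-space. Moreover $\dim Q=0$: any finite cozero cover $\{U_1,\dots,U_k\}$ of~$Q$ refines to the disjoint clopen cover $C_i=\overline{U_i}\setminus\bigcup_{j<i}\overline{U_j}$, since each closure $\overline{U_j}$ is clopen. Theorem~\ref{Theorem1} therefore supplies an open (hence clopen) Boolean subgroup $H\le Q$. Finally, basic disconnectedness passes to clopen subspaces---a cozero set in~$H$ extends by zero to a cozero set of~$Q$, whose closure is open in~$Q$ and contained in~$H$---so $H$ is the required open basically disconnected Boolean subgroup of~$Q$.

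The only step that requires real attention is verifying that $Q=G/N$ satisfies the hypotheses of Theorem~\ref{Theorem1}; once $N$ is produced by Remark~\ref{Remark4}, standard arguments deliver paracompactness, countable pseudocharacter, the $F$-space property, and $\dim Q=0$, after which Theorem~\ref{Theorem1} does the heavy lifting. The most delicate bookkeeping is the countable pseudocharacter of~$Q$, which depends on the group-theoretic identity $V_n\cdot N\subset V_{n-1}$ and, ultimately, on choosing $N$ as an intersection of neighborhoods whose squares nest.
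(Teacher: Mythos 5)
Your overall route is exactly the paper's: apply Remark~\ref{Remark4} to produce the closed nowhere dense normal $G_\delta$-subgroup $N=\bigcap_n V_n$, pass to the open quotient $Q=G/N$, check that $Q$ is nondiscrete, basically disconnected, Lindel\"of (hence paracompact), of countable pseudocharacter (via $V_n\cdot N\subset V_{n-1}$, the same computation the paper makes), and zero-dimensional, and then feed $Q$ into Theorem~\ref{Theorem1}. All of these verifications are correct except one.

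The flaw is in your direct argument that $\dim Q=0$. The sets $C_i=\overline{U_i}\setminus\bigcup_{j<i}\overline{U_j}$ are indeed disjoint, clopen, and cover $Q$, but they refine the cover $\{\overline{U_1},\dots,\overline{U_k}\}$, not the cover $\{U_1,\dots,U_k\}$: you only get $C_i\subset\overline{U_i}$, and $\overline{U_i}$ need not be contained in any $U_j$ (take $U_1$ a dense proper cozero subset, so that $C_1=\overline{U_1}=Q$). So as written this does not produce a refinement of the given cozero cover, which is what the definition of $\dim$ requires. The conclusion is nonetheless true and the repair is routine: either argue as the paper does, noting that $\beta Q$ is basically disconnected, hence zero-dimensional and compact, so $\dim Q=\dim_0 Q=\dim_0\beta Q=0$ ($Q$ being normal); or fix the combinatorial argument by first shrinking the cozero cover to a zero-set cover $Z_i\subset U_i$, using basic disconnectedness to separate $Z_i$ from $Q\setminus U_i$ by a clopen set $W_i$ with $Z_i\subset W_i\subset U_i$, and only then disjointifying the $W_i$. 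With that step corrected, your proof coincides with the paper's.
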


\begin{proof}
Let $G$ be a Lindel\"of basically disconnected group. If $G$ is not a $P$-space, 
then by Remark~\ref{Remark4} \,$G$ contains 
a closed nowhere dense $G_\delta$ normal subgroup $N=\bigcap_{n\in \omega}V_n$, where 
$V_i$, $i\in \omega$, are neighborhoods of the identity element 
such that  $V_{n+1}\cdot V_{n+1}\subset V_n$ and $V_n=V^{-1}_n$ for all $n\in \omega$. We have 
$N=\bigcap_{n\in \omega}(V_n\cdot N)$. Indeed, if $x\in G\setminus N$, then $x\notin V_n$ for some $n$, so that 
$x\notin V_{n+1}\cdot V_{n+1}\supset V_{n+1}\cdot N$. By Fact~\ref{Remark2} the 
canonical quotient map $h\colon G\to G/N$ is open, and by Fact~\ref{Remark1} the quotient $G/N$ is basically 
disconnected. It is nondiscrete, because $N$ is nowhere dense in $G$, and 
$$ 
\bigcap_{n\in \omega} 
h(V_n)=h\Bigl(\bigcap_{n\in \omega} h^{-1}(V_n)\Bigr)= h\Bigl(\bigcap_{n\in \omega} 
(V_n\cdot H)\Bigr)=h(H)=\{1\}\quad\text{in $G/N$}. 
$$ 
Therefore, $\psi(G/N)\le \omega$.  Finally, $G/N$ is Lindel\"of and $\dim G/N=0$, 
because the Stone--\v Cech compactification of any basically disconnected space is obviously basically 
disconnected and zero-dimensional. Thus, $G/N$ satisfies all assumptions of Theorem~\ref{Theorem1}. 
\end{proof}

\begin{corollary}
The existence of a Lindel\"of  basically disconnected group which is not a $P$-space 
is equivalent to the existence 
of a nondiscrete Lindel\"of Boolean basically disconnected group of countable pseudocharacter. 
\end{corollary}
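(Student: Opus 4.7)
The plan is to verify the two implications separately; both follow directly from Theorem~\ref{Theorem2} together with standard facts about Lindel\"of topological groups.

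For the easy direction, I will observe that any nondiscrete topological group of countable pseudocharacter automatically fails to be a $P$-space: the identity is a $G_\delta$-set, but it cannot be a neighborhood of itself (otherwise, by homogeneity, every singleton would be open and the group would be discrete), so this $G_\delta$ is not open. Hence any nondiscrete Lindel\"of Boolean basically disconnected group of countable pseudocharacter already supplies the required example of a Lindel\"of basically disconnected group that is not a $P$-space.

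For the main direction, let $G$ be a Lindel\"of basically disconnected group that is not a $P$-space. I apply Theorem~\ref{Theorem2} to obtain a nondiscrete topological quotient $Q=G/N$ with $\psi(Q)\le\omega$ containing an open basically disconnected Boolean subgroup $B$. It remains to verify that $B$ itself has all the properties listed in the statement: it is Boolean and basically disconnected by construction; it has countable pseudocharacter, inherited from $Q$ through the subspace $B$ containing the identity; it is nondiscrete, since an open discrete subgroup of any topological group forces (via homogeneity) every point of the ambient group to be isolated, contradicting the nondiscreteness of $Q$; and it is Lindel\"of, because $Q$ is Lindel\"of as a continuous image of $G$, any open subgroup of a topological group is automatically closed (its complement is a union of cosets, all of which are open), and closed subsets of Lindel\"of spaces are Lindel\"of.

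I do not foresee any essential obstacle: once Theorem~\ref{Theorem2} is available, the corollary is a short bookkeeping argument. The only mild point worth recalling is that open subgroups of topological groups are clopen, which is what delivers the Lindel\"of property of $B$ for free.
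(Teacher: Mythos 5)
Your proof is correct and follows exactly the route the paper intends: the paper states this corollary without proof as an immediate consequence of Theorem~2, and your verification of the remaining bookkeeping (nondiscreteness of the open subgroup via homogeneity, Lindel\"ofness via the clopenness of open subgroups and the preservation of Lindel\"ofness under continuous images and closed subspaces, and the easy converse from countable pseudocharacter) is accurate.
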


Our next theorem is concerned with free, free Abelian, and free Boolean topological $F'$-groups. Its proof is 
based on the following statements. 

\begin{proposition}
\label{pr1}
Suppose that a space $X$ contains clopen subsets $U_n$, $n\in \omega$,  such that $U_{n+1}\subset U_n$ for 
$n\in \omega$ and $C=\bigcap_{n\in \omega}U_n$ is a nonopen nonempty set. Then there exists a nondiscrete 
countable space $Y$ such that the groups $F_G(Y)$, $A_G(Y)$, and $B_G(Y)$ are topological quotients of $F_G(X)$, 
$A_G(X)$, and $B_G(X)$, respectively. 
\end{proposition}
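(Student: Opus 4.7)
The plan is to realize $Y$ as an $\mathbb{R}$-quotient space of $X$ obtained by collapsing each piece of a suitable countable partition to a single point, after which Fact~\ref{fact_added} will supply the quotient relations between the three types of free topological groups for free.

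First I would set $A_{-1}=X\setminus U_0$ and $A_n=U_n\setminus U_{n+1}$ for $n\in\omega$. Each of these sets is clopen (as the intersection of two clopen sets), $C=\bigcap_{n\in\omega}U_n$ is closed, and together $\{A_{-1}\}\cup\{A_n:n\in\omega\}\cup\{C\}$ partitions $X$ into at most countably many pieces. I would then let $q\colon X\to Y$ collapse each nonempty $A_n$ to a point $y_n$ and collapse $C$ to a point $y_\infty$, and equip $Y$ with the $\mathbb{R}$-quotient topology with respect to $q$. Then $Y$ is countable by construction.

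Next I would check that $Y$ is Tychonoff and nondiscrete. Since each $A_n$ is clopen, its characteristic function $\chi_{A_n}\colon X\to\mathbb{R}$ is continuous and constant on the fibers of $q$, so it descends to the continuous function $\chi_{\{y_n\}}$ on $Y$; hence each $\{y_n\}$ is clopen in $Y$. These descended functions separate every pair of distinct points of $Y$ (in particular, each $y_n$ from $y_\infty$), so $Y$ is Hausdorff, and hence Tychonoff once combined with the complete regularity built into the $\mathbb{R}$-quotient topology. On the other hand, $\{y_\infty\}$ is open in $Y$ if and only if $\chi_{\{y_\infty\}}\circ q=\chi_C$ is continuous on $X$, that is, if and only if $C$ is clopen in $X$; since $C$ is closed but nonopen by hypothesis, $\{y_\infty\}$ is not open in $Y$, and $Y$ is nondiscrete.

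Finally, I would invoke Fact~\ref{fact_added}: since $Y$ is an $\mathbb{R}$-quotient space of $X$, the groups $F_G(Y)$, $A_G(Y)$, and $B_G(Y)$ are topological quotients of $F_G(X)$, $A_G(X)$, and $B_G(X)$, respectively. The main subtlety throughout is distinguishing the ``limit'' point $y_\infty$ from the ``finite'' points $y_n$: the former is non-isolated precisely because $C$ is nonopen, while the latter are isolated precisely because the $U_n$ are clopen, so the clopenness hypothesis on the $U_n$ is indispensable in both halves of the verification.
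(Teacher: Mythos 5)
Your proposal is correct and follows essentially the same route as the paper: both decompose $X$ into the clopen ``annuli'' $U_n\setminus U_{n+1}$ (plus $X\setminus U_0$ and $C$), collapse each piece to a point, and then invoke Fact~\ref{fact_added}. The extra verification you supply (that $Y$ is Tychonoff via the descended characteristic functions and nondiscrete because $C$ is nonopen) is exactly what the paper compresses into the word ``clearly.''
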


\begin{proof}
We set $C_0=X\setminus U_0$ and $C_n=U_{n-1}\setminus U_n$ for 
$n=1,2,\dots$ and  let $Y$ be the image of $X$ under the quotient map contracting $C$ and each $C_n$, $n=0, 
1,\dots$, to a point. Clearly, $Y$ is a countable completely regular Hausdorff space with only one nonisolated 
point (the image of $C$). It remains to recall Fact~\ref{fact_added}. 
\end{proof}

\begin{proposition}
\label{pr2}
Let $X$ be a space, and let $x_0$ and $y_0$ be non-$P$-points in $X$. Then there exists a 
Tychonoff space $Y$ and an $\mathbb R$-quotient map $f\colon X\to Y$ such that $Y$ is a subset of $\mathbb R$ 
endowed with a topology finer than that induced by the Euclidean metric of~$\mathbb R$ and the points 
$f(x_0)$ and $f(y_0)$ are non-$P$-points in $Y$. Moreover, if $x_0\ne y_0$, then $f$ can be chosen so that 
$f(x_0)<f(y_0)$. 
\end{proposition}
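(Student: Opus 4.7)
The plan is to construct a single continuous function $\varphi\colon X\to\mathbb R$ with $\varphi(x_0)=0$ (and $\varphi(y_0)=1$ when $x_0\ne y_0$) such that $\varphi^{-1}(0)$ is not a neighborhood of $x_0$ and, when $x_0\ne y_0$, $\varphi^{-1}(1)$ is not a neighborhood of $y_0$. I would then set $Y=\varphi(X)\subset\mathbb R$, endow it with the $\mathbb R$-quotient topology determined by $\varphi$, and take $f=\varphi$. The topology of $Y$ is automatically finer than the Euclidean one, because the continuous map $\varphi\colon X\to\mathbb R$ factors as $X\to Y\hookrightarrow\mathbb R$, and the universal property of the $\mathbb R$-quotient makes the inclusion $Y\hookrightarrow\mathbb R$ continuous from the $\mathbb R$-quotient topology to the Euclidean one. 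This also yields $\psi(Y)\le\omega$, so non-$P$-points of $Y$ are precisely the non-isolated points.

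To build $\varphi$, I rely on the following reformulation, valid in any Tychonoff space: a point $p$ is a non-$P$-point if and only if there is a continuous $g\colon X\to[0,1]$ with $g(p)=0$ whose zero-set $g^{-1}(0)$ is not a neighborhood of $p$. (Given a $G_\delta$-set $G\ni p$ that is not a neighborhood of $p$, one obtains a zero-set $Z\subset G$ through $p$ as the zero-set of $\sum 2^{-n}f_n$ for suitable bounded $f_n$; $Z$ inherits the failure to be a neighborhood.) Choose such witnesses $\varphi_1,\psi_2\colon X\to[0,1/4]$ for $x_0$ and $y_0$. If $x_0=y_0$, take $\varphi=\varphi_1$. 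Otherwise, use complete regularity to select a continuous $\chi\colon X\to[0,1]$ with $\chi=0$ on a neighborhood $W_0$ of $x_0$ and $\chi=1$ on a neighborhood $W_1$ of $y_0$, together with ``bumps'' $\chi_0,\chi_1\colon X\to[0,1]$, where $\chi_i$ equals $1$ on a neighborhood of the corresponding point and vanishes outside $W_i$. Then set
\[
\varphi=\chi+\chi_0\varphi_1-\chi_1\psi_2.
\]
On a small neighborhood $V_0$ of $x_0$ one has $\chi=0$, $\chi_0=1$, $\chi_1=0$, so $\varphi|_{V_0}=\varphi_1|_{V_0}$; thus $\varphi(x_0)=0$ and $\varphi^{-1}(0)$ fails to be a neighborhood of $x_0$. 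Symmetrically, $\varphi=1-\psi_2$ on a neighborhood of $y_0$, giving $\varphi(y_0)=1$ and the analogous property at $y_0$.

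The remaining verification is short: if $\{0\}$ were open in $Y$, continuity of $f$ would force $\varphi^{-1}(0)$ to be open in $X$, contradicting the choice of $\varphi$; analogously $\{1\}$ cannot be open. Hence $f(x_0)=0$ and $f(y_0)=1$ are both non-isolated, i.e.\ non-$P$-points, in $Y$, and $0<1$ supplies the required order. The main obstacle is precisely this combination step: producing a single real-valued continuous function that simultaneously witnesses non-$P$-ness at two distinct points while assigning them prescribed ordered values. Complete regularity, used both to separate $x_0$ from $y_0$ by $\chi$ and to localize each witness through $\chi_0$ and $\chi_1$, is what keeps the two pieces from interfering and makes the assembly possible.
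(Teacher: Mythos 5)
Your proposal is correct and follows the same route as the paper: build one continuous real-valued function $f$ whose level sets at $x_0$ and $y_0$ fail to be neighborhoods of those points, pass to the $\mathbb R$-quotient topology on $f(X)$, and use that the quotient topology is finer than the Euclidean one (hence of countable pseudocharacter) to conclude the images are non-$P$-points. The only difference is that the paper declares the construction of $f$ ``easy'' and omits it, whereas you carry out the gluing with bump functions explicitly (just make sure to take the neighborhoods $W_0$ and $W_1$ disjoint so that $\chi_1$ vanishes near $x_0$).
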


\begin{proof}
Using the complete regularity of $X$, it is easy to construct a continuous function 
$f\colon X\to \mathbb R$ such that $f(x_0)<f(y_0)$, $x_0$ does not belong to the interior of 
$f^{-1}(\{f(x_0)\})$, $y_0$ does not belong to the interior of $f^{-1}(\{f(y_0)\})$, and  if 
$x_0\ne y_0$, then $f(x_0)<f(y_0)$. Let $Y$ be the set $f(X)$ endowed with the $\mathbb R$-quotient topology 
with respect to $f$. Since this is the finest completely regular topology with respect to which $f$ is 
continuous, it follows that the topology of $Y$ is Tychonoff and finer than that induced from~$\mathbb R$. 
That the points $f(x_0)$ and $f(y_0)$ are not isolated in $Y$, because their preimages are not open in $X$. 
Therefore, both of them are non-$P$-points in $Y$.
\end{proof}

\begin{theorem}
\label{pr3}
\begin{enumerate}
\item 
Any space $X$ for which $B_G(X)$ is an $F'$-group contains at most one non-$P$-point.
\item
If a space $X$ is not a $P$-space and $B_G(X)$ is an $F'$-group, then there exists a countable space $Z$ with 
a unique nonisolated point such that $Z$ is an $\mathbb R$-quotient image of $X$ and $B_G(Z)$ is an extremally 
disconnected quotient of $B_G(X)$. 
\end{enumerate}
\end{theorem}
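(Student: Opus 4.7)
The plan is to prove (2) first and then derive (1) by contradiction using a two-point strengthening.

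For (2), pick a non-$P$-point $x_0\in X$. The overall plan is to apply Proposition~\ref{pr2} (with $y_0=x_0$) to reduce to an $\mathbb R$-quotient image $Y\subseteq\mathbb R$ in which $a=f(x_0)$ is still non-$P$, and then apply Proposition~\ref{pr1} to $Y$ to obtain the desired countable space $Z$ with a unique nonisolated point. The core verification needed is the hypothesis of Proposition~\ref{pr1}: a decreasing sequence of clopen sets $U_n\subseteq Y$ whose intersection contains $a$ and is not a neighborhood of $a$. I would achieve this by arranging, in the choice of $f$, that $\mathbb R\setminus Y$ accumulates at $a$; intervals of $\mathbb R$ with endpoints in $\mathbb R\setminus Y$ then cut out sets that are Euclidean-clopen in $Y$ (hence clopen in the finer topology), and since $a$ is non-$P$ their nested intersection cannot be a neighborhood of $a$. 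Once $Z$ is produced, the rest is mechanical: by Fact~\ref{fact_added} the group $B_G(Z)$ is a topological quotient of $B_G(X)$; Facts~\ref{Remark1} and~\ref{Remark2} transfer the $F'$-property to $B_G(Z)$; and $Z$ being countable makes $B_G(Z)$ countable, so every open set of $B_G(Z)$ is a countable union of closed singletons, hence a cozero set. Disjoint open sets are therefore disjoint cozero sets, and the $F'$-property upgrades to extremal disconnectedness.

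For (1), suppose toward contradiction that $X$ has distinct non-$P$-points $x_0\ne y_0$. Proposition~\ref{pr2} supplies an $\mathbb R$-quotient $Y\subseteq\mathbb R$ with $a=f(x_0)<b=f(y_0)$ both non-$P$ in $Y$; by choice of $f$ we may additionally arrange $c\in(a,b)\setminus Y$, so that $Y$ splits into Euclidean-clopen pieces $Y_1,Y_2$ containing $a,b$ respectively. Running the construction of (2) on each piece produces a countable $\mathbb R$-quotient image $Z'$ of $X$ with exactly two nonisolated points $z_1,z_2$ and two sequences $x_n\to z_1$, $y_n\to z_2$. As in (2), $B_G(Z')$ is a countable extremally disconnected $F'$-group. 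Now $w=\{z_1,z_2\}\in B_G(Z')$ is a common limit of the two disjoint sequences $\{x_n,z_2\}$ and $\{z_1,y_n\}$ by continuity of addition; each sequence is closed in the countable regular (hence normal) open subspace $B_G(Z')\setminus\{w\}$, so they can be separated by disjoint open, automatically cozero, sets $U,V$ in $B_G(Z')$. Yet $w\in\overline U\cap\overline V$, contradicting extremal disconnectedness.

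The hardest step will be the image-gap condition in (2): for a general Tychonoff $X$ the image of a continuous $f\colon X\to\mathbb R$ may be an entire interval, leaving no room for the clopen $U_n$. Engineering a suitable $f$ (for instance by post-composition with a plateau-like continuous function on $\mathbb R$) so that its image is sparse near $a$ while $a$ remains non-$P$ in the finer topology is the main technical obstacle; everything else is a routine assembly of the paper's quotient-preservation facts and of the observation that in a countable Tychonoff space the $F'$-property and extremal disconnectedness coincide.
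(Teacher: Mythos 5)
There is a genuine gap, and it sits exactly where you flagged it: the ``image-gap'' condition. For a general Tychonoff $X$ (e.g.\ a connected one) every continuous real-valued image is an interval, and post-composing with another continuous map cannot create gaps, since continuous images of connected sets are connected. So you cannot arrange that $\mathbb R\setminus Y$ accumulates at $f(x_0)$, nor that $(a,b)\setminus Y\neq\varnothing$ in part (1); the Euclidean-clopen sets you need may simply not exist. The tell-tale symptom is that your argument for (2) never uses the hypothesis that $B_G(X)$ is an $F'$-group until the final upgrade to extremal disconnectedness; if the construction of $Z$ worked without it, every non-$P$-space would have a nondiscrete countable $\mathbb R$-quotient with one nonisolated point, which fails already for $X=[0,1]$ (an $\mathbb R$-quotient of a connected space is connected, and a countable connected Tychonoff space is a point). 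The paper's key move, which is absent from your proposal, is to extract the clopen splitting of $Y$ \emph{from} the $F'$-property of $B_G(Y)$: one extends the Euclidean metric $d$ on $Y\subset\mathbb R$ to a continuous Graev seminorm $\|\boldsymbol\cdot\|_d$ on $B_G(Y)$ and observes that the balls $\{\|f(x_0)-x\|_d<a/2\}$ and $\{\|f(x_0)-x\|_d>a/2\}$ are disjoint cozero sets, hence have disjoint closures; intersecting with $Y$ forces the midpoint to be omitted by one closure, producing a clopen $W\subset Y$ separating $f(x_0)$ from $f(y_0)$, and then $B_G(Y)\cong B_G(W)\times B_G(Y\setminus W)$ contradicts Curtis's theorem that a product of two non-$P$-spaces is not an $F'$-space. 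The paper then proves (2) by \emph{applying} (1) to $Y$ (which has countable pseudocharacter, so non-$P$ $=$ nonisolated there) to conclude $Y$ has a unique nonisolated point; only then are open neighborhoods of that point automatically clopen, so Proposition~\ref{pr1} applies. Your order of deduction ((2) first, (1) from it) is therefore not salvageable as written.

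Two smaller problems: in your argument for (1) you need nontrivial convergent sequences $x_n\to z_1$, $y_n\to z_2$ in $Z'$, but a countable space with a unique nonisolated point (as produced by Proposition~\ref{pr1}) need not contain any nontrivial convergent sequence, so the ``two disjoint sequences with a common limit'' contradiction is not available; the endgame via Curtis's theorem avoids this. And the step ``every open set is a countable union of closed singletons, hence a cozero set'' is misstated: a nonisolated singleton is a zero set but not a cozero set, and a countable union of zero sets need not be cozero. The correct (and standard) route to ``countable $F'$ implies extremally disconnected'' is that a countable Tychonoff space is normal and every closed subset is a $G_\delta$, hence a zero set, so every open set is cozero.
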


\begin{proof}
(1)\enspace 
Suppose that $x_0$ and $y_0$ are two different non-$P$-points of $X$. 
Let $Y$ and $f\colon X\to Y$ be as in Proposition~\ref{pr2}. 
According to Facts~\ref{Remark2} and~\ref{fact_added} the 
group $B_G(Y)$ is an open image of $B_G(X)$, and by Fact~\ref{Remark1} it is an $F'$-group. 

We denote by $d$ the continuous 
metric on $Y$ induced by the Euclidean metric and by $\|{\boldsymbol\cdot}\|_d$ the Graev extension of 
$d$ to a continuous seminorm on $B_G(Y)$ (see~\cite{Axioms}). 
Let $d(f(x_0),f(y_0))=a$. We have $a>0$. The sets 
$$
U=\{x\in B_G(Y): \|f(x_0)-x\|_d<a/2\}
$$
and
$$
V=\{x\in B_g(Y): \|f(x_0)-x\|_d>a/2\}
$$
are disjoint cozero sets in $B_G(Y)$. Therefore, they have disjoint closures. 

Let $z_0\in \mathbb R$ be the midpoint between $f(x_0)$ and $f(y_0)$, that is, $z_0=f(x_0)+a/2=f(y_0)- 
a/2$. We set 
$$
\widetilde U=\{y\in Y: f(x_0)-a/2 <y<z_0\} \quad\text{and}\quad 
\widetilde V=\{y\in Y: z_0<y<f(y_0)+a/2\}.
$$ 
Note that 
$U\cap Y= \widetilde U$ and $V\cap Y= \widetilde V$. Therefore, either $z_0\notin \overline U\cap Y$ (in which 
case the sets $\{y\in Y: y<z_0\}$ and $\{y\in Y: z_0\le y\}$ are disjoint closed subsets of $Y$ covering $Y$) 
or $z_0\notin \overline V\cap Y$ (in which 
case such sets are $\{y\in Y: y\le z_0\}$ and $\{y\in Y: z_0< y\}$). In either case, $Y$ has a clopen subset $W$ 
containing $f(x_0)$ and missing $f(y_0)$. Thus, $Y$ is the topological sum $W\oplus Y\setminus W$, whence 
$B_G(Y)=B_G(W)\times B_G(Y\setminus W)$  \cite[Proposition~7]{Axioms}. We have shown that the $F'$-space $B_G(Y)$ 
is the product of two spaces each of which contains a non-$P$-point and hence is not a $P$-space. This 
contradicts the main theorem of~\cite{Curtis}. 

(2)\enspace
Let $X$ be a non-$P$-space for which the free Boolean topological group $B_G(X)$ 
is an $F'$-space. By Proposition~\ref{pr2} \,$X$ has 
a nondiscrete $\mathbb R$-quotient $Y$ of countable pseudocharacter. 
By Facts~\ref{Remark1}, \ref{Remark2}, and~\ref{fact_added} \,$B_G(Y)$ is an $F'$-group. Since the 
pseudocharacter of $Y$ is countable, it follows from assertion~(1) that $Y$ has only one 
nonisolated point, and this point is not a $P$-point. Proposition~\ref{pr1} implies the existence of a 
nondiscrete countable space $Z$ such that $B_G(Z)$ is a topological quotient of $B_G(Y)$ and hence an $F'$-group; 
therefore, $B_G(Z)$ is extremally disconnected. 
\end{proof}

\begin{corollary}
The existence of a free Boolean topological $F'$-group which is not a $P$-space is 
equivalent to the existence of a selective ultrafilter on~$\omega$. 
\end{corollary}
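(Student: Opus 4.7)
The plan is to deduce the corollary from Theorem~\ref{pr3} together with the classical correspondence between countable nondiscrete extremally disconnected Boolean topological groups of the form $B_G(Z)$ (with $Z$ having a single nonisolated point) and selective ultrafilters on $\omega$, a result going back to Sirota and Louveau (see also \cite[Sec.~5]{Zelenyuk}).

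For the forward implication, suppose $B_G(X)$ is an $F'$-group which fails to be a $P$-space. Fact~\ref{Remark3} then implies that $X$ is not a $P$-space, so Theorem~\ref{pr3}(2) furnishes a countable space $Z$ with a unique nonisolated point $\infty$ such that $B_G(Z)$ is a topological quotient of $B_G(X)$ and is extremally disconnected. Enumerating the isolated points of $Z$ as $\omega$, one sets $\mathscr U = \{U \subset \omega : \{\infty\}\cup U \text{ is a neighborhood of } \infty \text{ in } Z\}$. Since $B_G(Z)$ is nondiscrete and extremally disconnected, the standard argument shows that $\mathscr U$ is in fact a selective ultrafilter on $\omega$.

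For the converse, given a selective ultrafilter $\mathscr U$ on $\omega$, let $Z=\omega\cup\{\infty\}$, where each point of $\omega$ is isolated and the neighborhoods of $\infty$ are the sets $\{\infty\}\cup U$ with $U\in\mathscr U$. Then $Z$ is a countable Tychonoff space with a unique nonisolated point $\infty$, and $\infty$ fails to be a $P$-point, so $Z$ is not a $P$-space. By the cited correspondence, $B_G(Z)$ is extremally disconnected and therefore an $F'$-group, while Fact~\ref{Remark3} guarantees that it is not a $P$-space.

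The principal obstacle is the invocation of this classical equivalence. In the forward direction one has to verify that the trace on the generating set $Z\setminus\{\infty\}$ of the neighborhood filter of the identity of the extremally disconnected group $B_G(Z)$ is not just a filter but an ultrafilter (which rests on the standard fact that in a countable nondiscrete extremally disconnected group the neighborhood filter at the identity, restricted to its complement, is an ultrafilter), and that this ultrafilter admits the diagonal quasi-intersection property defining selectivity. In the reverse direction one has to check that selectivity of $\mathscr U$ is sufficient to ensure that the finest group topology on $[Z]^{<\omega}$ extending the topology of $Z$ is extremally disconnected. Both facts are well documented in the literature on extremally disconnected topological groups, so in the write-up I would state the equivalence as a lemma and cite it accordingly, rather than reprove it.
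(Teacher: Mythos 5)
Your overall architecture coincides with the paper's: apply Theorem~\ref{pr3}(2) to get a countable nondiscrete $Z$ with $B_G(Z)$ extremally disconnected, extract a selective ultrafilter from it, and for the converse invoke the known construction of an extremally disconnected free Boolean group from a selective ultrafilter (the paper cites \cite[Theorem~5.1]{Zelenyuk} and \cite[Theorem~8.2]{ArXiv} for that half, and your converse is fine). The problem is the forward extraction step, which you dispose of as a ``classical correspondence'' going back to Sirota and Louveau. That attribution covers only the converse direction: Sirota's and Louveau's theorems \emph{build} extremally disconnected groups from CH or from Ramsey ultrafilters. There is no classical result extracting a selective ultrafilter from a countable nondiscrete extremally disconnected group; what is classically known in that direction is only the existence of rapid ultrafilters \cite{RS} and of $P$-points \cite{Protasov}. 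The statement you need --- that a nondiscrete free Boolean extremally disconnected group yields a selective ultrafilter --- is a specific theorem of \cite{Sipa-free-ed} whose proof uses the free (Boolean) group structure of $B_G(Z)$ in an essential way; this is exactly the citation the paper makes at this point. So the lemma you propose to ``state and cite'' exists in the literature, but not where you point, and not in the generality (``extremally disconnected groups'') you suggest.

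The parenthetical justification you offer is also wrong as stated. The ``standard fact that in a countable nondiscrete extremally disconnected group the neighborhood filter at the identity, restricted to its complement, is an ultrafilter'' is not a property of extremally disconnected groups: it says that for every partition $A\sqcup B$ of $G\setminus\{1\}$ one has $1\notin\overline A\cap\overline B$, which is the defining property of \emph{maximal} spaces. Extremal disconnectedness only separates closures of disjoint \emph{open} sets, and an arbitrary partition of $G\setminus\{1\}$ (or of $Z\setminus\{\infty\}$ viewed inside $B_G(Z)$, where $Z$ is not open) need not consist of open sets. Even granting that the trace filter is an ultrafilter, selectivity (the diagonal quasi-intersection property) does not follow from any ``standard argument'' about extremally disconnected groups --- its consistency-strength is precisely why the corollary is interesting. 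If you replace your lemma by a citation of \cite{Sipa-free-ed} for the forward half, your proof becomes the paper's; as written, the forward half has a genuine gap.
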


\begin{proof}
If there exists a  non-$P$-space $X$ for which $B_G(X)$ is an $F'$-group, then, by Theorem~\ref{pr3}\,(2), there 
exists a nondiscrete countable space $Z$ for which $B_G(Z)$ is extremally disconnected. According to 
\cite{Sipa-free-ed}, the existence of a nondiscrete free Boolean extremally disconnected group implies that of a 
selective ultrafilter on~$\omega$. 

Conversely, it is well known (see, e.g., \cite[Theorem~5.1]{Zelenyuk} or 
\cite[Theorem~8.2]{ArXiv}) that the existence of a selective ultrafilter on $\omega$ implies the existence of a 
nondiscrete countable free Boolean topological group which is an extremally disconnected space and hence an 
$F'$-space. Clearly, being countable and nondiscrete, it cannot be a $P$-space.
\end{proof}


\begin{theorem}
\label{Theorem3}
For any space $X$,  the following conditions are equivalent: \textup{(i)}~the free 
topological group of $X$ is an $F'$-space, \textup{(ii)}~the free Abelian 
topological group of $X$ is an $F'$-space, \textup{(iii)}~$X$ is a $P$-space. 
\end{theorem}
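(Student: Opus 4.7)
The plan is to split the equivalence into four directions. First, (iii) $\Rightarrow$ (i) and (iii) $\Rightarrow$ (ii) follow immediately from Fact~\ref{Remark3}: if $X$ is a $P$-space, so are $F_G(X)$ and $A_G(X)$, and any $P$-space is trivially an $F'$-space because its cozero sets are clopen. For (i) $\Rightarrow$ (ii), I would use that $A_G(X)$ is the topological quotient of $F_G(X)$ by the commutator subgroup (Fact~\ref{Remark00}), that this quotient map is open (Fact~\ref{Remark2}), and that the $F'$ property is preserved under open maps (Fact~\ref{Remark1}). The remaining implication (i) $\Rightarrow$ (iii) will follow from (i) $\Rightarrow$ (ii) composed with (ii) $\Rightarrow$ (iii).

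The core content is (ii) $\Rightarrow$ (iii), which I plan to prove by contradiction. Assume $A_G(X)$ is an $F'$-group but $X$ is not a $P$-space. Since $B_G(X)$ is the topological quotient of $A_G(X)$ by $A_G(2X)$ (Fact~\ref{Remark00}) and that quotient map is open (Fact~\ref{Remark2}), $B_G(X)$ is also an $F'$-group (Fact~\ref{Remark1}). Theorem~\ref{pr3}(2) then supplies a countable non-discrete space $Z$ (with a unique non-isolated point) that is an $\mathbb R$-quotient of $X$. Combining Fact~\ref{fact_added} with Facts~\ref{Remark2} and~\ref{Remark1}, I obtain that $A_G(Z)$ is a topological quotient of $A_G(X)$ via an open map, so $A_G(Z)$ is a countable $F'$-group.

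The final contradiction exploits the torsion-freeness of free Abelian groups. In the countable Tychonoff space $A_G(Z)$, every open set is a countable union of cozero sets, hence cozero by Fact~\ref{Fact1}; thus the $F'$ property upgrades to extremal disconnectedness (the argument is exactly that of Remark~\ref{Remark0}, applied with $H=G=A_G(Z)$). But by Malykhin's theorem, recalled in the introduction, any extremally disconnected topological group contains an open Boolean subgroup, and since $A_G(Z)$ is torsion-free its only Boolean subgroup is $\{0\}$; for $\{0\}$ to be open means $A_G(Z)$ is discrete, contradicting the non-discreteness of $Z$, which embeds as a subspace of $A_G(Z)$.

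The main obstacle is precisely this last step: what makes the argument go through in ZFC is the algebraic obstruction of torsion-freeness. The free Boolean case handled by Theorem~\ref{pr3} lacks this obstruction, which is why only a consistency statement is available there, whereas for $F_G(X)$ and $A_G(X)$ one gets a clean ZFC equivalence with the $P$-space property.
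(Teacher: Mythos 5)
Your proposal is correct and follows essentially the same route as the paper: reduce to the free Abelian case, pass to $B_G(X)$ via the open quotient, invoke Theorem~\ref{pr3}(2) to obtain a countable nondiscrete $\mathbb R$-quotient $Z$, note that the countable $F'$-group $A_G(Z)$ is extremally disconnected, and derive a contradiction from Malykhin's theorem together with the torsion-freeness of free Abelian groups. The only differences are that you spell out a few steps the paper leaves implicit (the equivalence of $F'$ and extremal disconnectedness for countable spaces, and the explicit decomposition of the three-way equivalence).
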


\begin{proof}
Let $X$ be a non-$P$-space. In view of Facts~\ref{Remark1}--\ref{Remark00} it suffices to check that $A_G(X)$ 
is not an $F'$-group. Assume the contrary. Then $B_G(X)$ is an $F'$-group by Fact~\ref{Remark00}. By  
Theorem~\ref{pr3}\,(2) there exists a nondiscrete countable $\mathbb R$-quotient $Z$ of $X$. By 
Fact~\ref{fact_added} \,$A_G(Z)$ is an $F'$-group. It is extremally disconnected, being countable. 
According to Malykhin's theorem, any extremally disconnected group contains an open Boolean 
subgroup~\cite{Malykhin1975}. However, the only Boolean subgroup of any free Abelian group is trivial. 
Thus, $A_G(Z)$ must be discrete, which contradicts the nondiscreteness of~$Z$. 
\end{proof}

The author is most grateful to Evgenii Reznichenko for very fruitful discussions.

\end{document}